\newcommand{\llim}{\mathop \mathrm{l.i.m.}}
\newcommand{\esssup}{\mathop \mathrm{ess\ sup}}
\newtheorem{thm}{Theorem}[section]
\newtheorem{lem}{Lemma}[section]
\newtheorem{prp}{Proposition}[section]
\theoremstyle{remark}
\newtheorem{rem}{Remark}[section]
\theoremstyle{definition}
\newtheorem{dfn}{Definition}[section]
\theoremstyle{plain}
\newcommand{\cF}{{\cal F}}
\newcommand{\vp}{\varphi}
\newcommand{\vf}{\varphi}
\newcommand{\R}{\mathds{R}^d}
\renewcommand{\P}{\mathds{P}}
\newcommand{\E}{\mathds{E}}
\begin{document}

\title[On exponential decay of a distance]{On exponential decay of a distance between solutions of an SDE with non-regular drift}

\author{O. Aryasova}
\address{Institute of Geophysics, National Academy of Sciences of Ukraine,
32 Palladin ave., 03142, Kyiv, Ukraine; National Technical University of Ukraine ``Igor Sikorsky Kyiv Politechnic Institute'', Kyiv, Ukraine}
\email{oaryasova@gmail.com}

\author{A. Pilipenko}
\address{Institute of Mathematics,  National Academy of Sciences of
Ukraine, 3 Tereshchenkivska str., 01601, Kyiv, Ukraine; National Technical University of Ukraine ``Igor Sikorsky Kyiv Politechnic Institute'', Kyiv, Ukraine}
\email{pilipenko.ay@gmail.com}
\thanks{The research is partially supported by the Alexander von Humboldt Foundation (Project ``Singular diffusions: analytic and stochastic approaches'')}

\subjclass[2000]{60H10, 60H99}
\dedicatory{To the memory of our colleague Sergey Makhno}

\keywords{SDE with discontinuous coefficients; Long-time behavior of solutions}

\begin{abstract}
We consider a multidimensional stochastic differential equation
with a Gaussian noise and a drift vector having a jump
discontinuity along a hyperplane. The large time behavior of the
distance between two solutions starting from different points is
studied. We find a sufficient condition for the exponential decay
of the distance if the drift does not satisfy a dissipative
condition on a given hyperplane.
\end{abstract}

\maketitle \thispagestyle{empty}
\section{Introduction}
Consider a $d$-dimensional stochastic differential equation (SDE)
\begin{equation}\label{eq_main}
\left\{
\begin{aligned}
d\vp_t(x)&=\left(-\lambda\vp_t(x)+\alpha(\vp_t(x))\right)dt+\sum_{k=1}^{m}\sigma_{k}(\vp_t(x))dw_{k}(t), t\geq0,\\
\vp_0(x)&=x,\\
\end{aligned}\right.
\end{equation}
where $x=(x^1,\dots, x^d)\in\mathbb{R}^d$, $\lambda>0$, $(w(t))_{t\geq0}=(w_1(t),\dots,w_m(t))_{t\geq0}$ is a standard $m$-dimensional Wiener process, $\alpha:\R\to\R$ and $\sigma=(\sigma_1,\dots,\sigma_m):\R\to\R\times\mathds{R}^{m}$ are measurable functions.

It is well known that if $\alpha, \sigma_k$ are Lipschitz continuous and  $\lambda $ is large enough,
then the distance between solutions $\vp_t(x_1)$ and $\vp_t(x_2)$ to \eqref{eq_main}, which starting from two different points $x_1$ and $x_2$,
converges to 0 in $L_p(\Omega,\cF,\P)$  as $t\to\infty$,  (e.g., \cite{Ito+64}, \cite{DaPrato+96}). Moreover, the solutions converge themselves to a stationary solution of \eqref{eq_main}.
Lipschitz continuity of $\alpha$ may be relaxed; it can be replaced, for example, by dissipative assumption. It worth noting a recent work by Flandoli et al. \cite{Flandoli+2017}. The authors consider an SDE with drift belonging to $C^1$ and being dissipative out of some bounded set $U$ and  a diffusion coefficient $\sigma$ being constant. They prove the synchronization of the flow for large enough $\sigma$. Despite the drift is not supposed to be globally dissipative, the technique of their paper does not allow to consider a discontinuous drift.

In one-dimensional case, results on exponential decay of a distance between solutions to an SDE with non-regular drift were obtained in \cite{ Aryasova+2020, Aryasova+12}.

We discuss a similar problem if $\sigma_k$ are Lipschitz continuous but $\alpha$ may have a jump discontinuity at a
hyperplane. We do not assume that coefficients of the equation satisfy dissipative
conditions, and the results on convergence of solutions  to zero are new.  As a corollary of our results
on convergence of distance between solutions, we get existence and uniqueness of
 a stationary solution of
$$
\vp_t=\vp_s+\int_s^t(-\lambda\vp_u+\alpha(\vp_u))du+\sum_{k=1}^m\int_s^t\sigma_k(\vp_u)dw_k(u), \ s\leq t.
$$

Note that all our results concern strong solutions to SDEs, i.e., all solutions are defined
on the given probability space
and expectations are taken with respect to the given probability measure.
If one is interested in weak solutions and a distance between distributions
of $\vf_t(x)$ and $\vf_t(y)$, then
assumptions on coefficients may be relaxed essentially, see for example
   \cite{Khasminskii12, Kulik17}.




\section{Large time behavior of the distance between two solutions}

We consider the SDE \eqref{eq_main}.
Denote
$$
S=\{x\in\R: x^d=0\},
$$
$$
\mathds{R}^d_+=\{x\in\R: x^d>0\},
\mathds{R}^d_-=\{x\in\R: x^d<0\}.
$$

In what follows we assume that coefficients of \eqref{eq_main} satisfy the following conditions.
 \begin{enumerate}
\item [(A1)] The function $\alpha$ is bounded.
\item [(A2)]{\it Lipschitz continuity on $\mathbb{R}^d_{\pm}$}: There exists $\tilde{K}_{\alpha}>0$ such that for all $x,y\in \mathbb{R}^d_+$ or $x,y\in \mathbb{R}^d_-$,
    $$
    |\alpha(x)-\alpha(y)|\leq \tilde{K}_{\alpha}|x-y|.
    $$
\end{enumerate}
It follows from (A2) that for all $\tilde{x}\in S$, there exist limits
$$
\alpha_+(\tilde{x}):=\lim_{\substack{x\to \tilde{x},\\ x\in \mathbb{R}_+^d}}\alpha(x), \alpha_-(\tilde{x}):=\lim_{\substack{x\to \tilde{x}, \\ x\in \mathbb{R}_-^d}}\alpha(x).
$$
\begin{enumerate}
\item [(B1)] The function $\sigma$ is bounded.
\item [(B2)]{\it Lipschitz continuity on $\R$}: There exists $\tilde{K}_{\sigma}>0$ such that for all $x,y\in\R$,
    $$
        |\sigma(x)-\sigma(y)|\leq \tilde{K}_{\sigma}|x-y|.
    $$
   \item [(B3)]{\it Uniform ellipticity}: There exists a constant $B_\sigma>0$ such that for all $x\in \R$, $\theta\in\R$,

\begin{equation}\label{eq_ellipticity}
\theta^\ast\sigma(x)\sigma^\ast(x)\theta\geq B_\sigma|\theta|^2.
\end{equation}
\end{enumerate}

Under these assumptions there exists a unique strong solution to \eqref{eq_main} (see, for example, \cite{Veretennikov81}).

\begin{rem}\label{Remark_zero_time}
Note that since $\sigma$ is uniformly elliptic, the solution to equation \eqref{eq_main} spends zero time on $S$. So we can redefine the function $\alpha$ on $S$ in an arbitrary way.
\end{rem}

The main result of the paper is following:
\begin{thm}\label{Theorem_convergence}
Let  conditions (A1), (A2), (B1), (B2), (B3) hold. Then for any $p\geq 1:$
$$ \exists\Lambda=\Lambda(\alpha,\sigma)>0 \ \forall\lambda>\Lambda \ \exists C_1=C_1(\lambda,\alpha,\sigma)>0 \ \exists C_2=C_2(\lambda,\alpha,\sigma)>0: \ \forall x,y\in\R,
$$
\begin{equation}\label{eq_inequality_difference}
  (\E|\vp_t(y)-\vp_t(x)|^p)^{\frac{1}{p}}\leq C_1 e^{-C_2t}|y-x|.
\end{equation}
  Here $\vp_t(x)$ is a solution to equation \eqref{eq_main} starting at the point $x$.
\end{thm}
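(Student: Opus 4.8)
The plan is to reduce \eqref{eq_main} to a stochastic differential equation with globally Lipschitz, uniformly dissipative coefficients by a Zvonkin-type change of variables that absorbs the jump of $\alpha$ across $S$. Concretely, I would look for a map $\Phi=\mathrm{id}+u$, with $u=u_\lambda:\R\to\R$ bounded and $C^1$, where $u$ solves the resolvent equation for the generator of \eqref{eq_main},
$$\mathcal{L}u-\lambda u=-\alpha,\qquad \mathcal{L}:=\tfrac12\mathrm{tr}\big(\sigma\sigma^\ast D^2\big)+(-\lambda x+\alpha)\cdot\nabla.$$
Applying It\^o's formula to $Y_t:=\Phi(\vp_t(x))$ and using the equation for $u$, the discontinuous term $\alpha$ cancels and the transformed drift becomes $\tilde b(Y)=-\lambda\big(\Phi^{-1}(Y)-u(\Phi^{-1}(Y))\big)$, while the transformed diffusion is $(I+\nabla u)\sigma$ composed with $\Phi^{-1}$. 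Thus $Y_t$ solves an SDE whose drift is dissipative of order $\lambda$ and whose coefficients are Lipschitz, provided $u$ has the regularity and smallness described next.

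The two estimates that drive everything are (i) $\|\nabla u_\lambda\|_\infty\to0$ as $\lambda\to\infty$, and (ii) $\|D^2u_\lambda\|_{L^\infty(\R)}=O(1)$ uniformly in $\lambda$. Estimate (i) makes $\Phi$ a bi-Lipschitz homeomorphism with constants tending to $1$, keeps the transformed diffusion uniformly elliptic, and guarantees that the transformed drift stays dissipative: for $Z:=Y_t(y)-Y_t(x)$ a direct computation with $x_i:=\Phi^{-1}(Y_t(\cdot))$ gives $\langle Z,\tilde b(Y_t(y))-\tilde b(Y_t(x))\rangle\le-\lambda\tfrac{1-\|\nabla u\|_\infty}{1+\|\nabla u\|_\infty}|Z|^2$, which is strictly dissipative of order $\lambda$ as soon as $\|\nabla u\|_\infty<1$. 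Estimate (ii) is what turns $(I+\nabla u)\sigma$ into a genuinely Lipschitz field: although $\alpha$ is only piecewise Lipschitz, the jump across $S$ forces merely a bounded jump of $D^2u$, so $\nabla u$ remains Lipschitz with constant $\|D^2u\|_\infty$ and the transformed diffusion has a squared Lipschitz constant of order one, uniformly in $\lambda$.

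Granting (i)--(ii), I would finish as in the classical dissipative case. Fix $\lambda$ large enough that $\|\nabla u_\lambda\|_\infty<1$ and that $\lambda\tfrac{1-\|\nabla u_\lambda\|_\infty}{1+\|\nabla u_\lambda\|_\infty}$ dominates the (order one) squared Lipschitz constant of the transformed diffusion; this determines $\Lambda=\Lambda(\alpha,\sigma)$. Applying It\^o's formula to $|Z_t|^p$ (for $p\ge2$; the range $1\le p<2$ follows by Jensen) and taking expectations, the martingale part drops and the remaining drift is bounded by $-c\lambda\,\E|Z_t|^p$, so Gronwall's inequality gives $\E|Y_t(y)-Y_t(x)|^p\le e^{-c\lambda t}|\Phi(y)-\Phi(x)|^p$. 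Transferring back through the bi-Lipschitz bounds $|\vp_t(y)-\vp_t(x)|\le L_{\Phi^{-1}}|Y_t(y)-Y_t(x)|$ and $|\Phi(y)-\Phi(x)|\le(1+\|\nabla u\|_\infty)|y-x|$ yields \eqref{eq_inequality_difference} with $C_2\sim\lambda$. Strong well-posedness used throughout is already furnished by \cite{Veretennikov81}, and Remark~\ref{Remark_zero_time} lets me disregard the values of $\alpha$ on $S$.

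The main obstacle is the analysis of $u_\lambda$ across the discontinuity hyperplane $S$, i.e.\ establishing (i)--(ii) with the right dependence on $\lambda$. Since $\mathcal{L}$ carries the unbounded Ornstein--Uhlenbeck drift $-\lambda x$, I would use the representation $u_\lambda(x)=\E\int_0^\infty e^{-\lambda t}\alpha(\vp_t(x))\,dt$ and bound $\nabla u_\lambda$ by a Bismut--Elworthy--Li formula, where the $-\lambda x$ term makes the derivative flow contract and produces the decay $\|\nabla u_\lambda\|_\infty\lesssim\lambda^{-1/2}\|\alpha\|_\infty$; the delicate point is that the jump of $\alpha$ contributes a local-time term on $S$ to the derivative flow --- precisely the failure of dissipativity on $S$ emphasised in the abstract --- which must be shown not to spoil the gradient estimate. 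For (ii) I would split $\R$ along $S$, treat the two one-sided transmission problems, and combine interior and up-to-$S$ Schauder and $W^{2,p}$ estimates to bound $D^2u_\lambda$ on each side uniformly in $\lambda$.
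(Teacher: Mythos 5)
Your overall strategy (a Zvonkin-type transform $\Phi=\mathrm{id}+u_\lambda$ that absorbs the jump of $\alpha$, followed by the classical dissipative argument for the transformed SDE) is genuinely different from the paper's, which never removes the discontinuity: the paper differentiates the flow directly, so the jump of $\alpha$ appears as a local-time term in the equation \eqref{eq_derivative_1} for $\nabla\vp_t$, and the entire proof reduces to an exponential-moment bound for $L_t^0(\vp^d(x))$ obtained via Tanaka's formula and a Khas'minskii-type lemma. As a proof, however, your proposal is incomplete: everything rests on the estimates (i) and (ii), and both are left as claims whose proofs face concrete obstructions you do not resolve. For (i), the route you propose --- probabilistic representation plus Bismut--Elworthy--Li --- is circular: BEL expresses $\nabla u_\lambda$ through the derivative flow $\nabla\vp_t$, and for the present SDE that derivative flow contains precisely the singular local-time term $\int_0^t D(\vp_s(x))Y_s(x)\,dL_s^0(\vp^d(x))$ whose control \emph{is} the original problem. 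You acknowledge this (``must be shown not to spoil the gradient estimate'') but give no argument; in the paper this is exactly the content of Lemma \ref{Lemma_estimate_local_time}, Lemma \ref{Lemma_estimation_expectation} and the additivity argument \eqref{eq_homogeneous_additive}--\eqref{eq_local_time_moment}, i.e.\ it is the heart of the matter, not a removable technicality.

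For (ii), the claim $\|D^2u_\lambda\|_{L^\infty}=O(1)$ uniformly in $\lambda$ does not follow off the shelf from interior or up-to-$S$ Schauder/$W^{2,p}$ theory, because the resolvent equation contains the drift $-\lambda x$: on a ball of radius $R$ its sup norm is $\lambda R$, so the constants in those estimates blow up with $\lambda$. A pointwise bound extracted from the equation itself gives only $|D^2u_\lambda(x)|\lesssim \lambda|x|\,|\nabla u_\lambda(x)|+\lambda|u_\lambda(x)|+\|\alpha\|_\infty$, which with $\|\nabla u_\lambda\|_\infty\sim\lambda^{-1/2}$ is of order $\sqrt{\lambda}\,|x|$, not $O(1)$. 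One can hope to rescue the claim by the scaling $x\mapsto x/\sqrt{\lambda}$, under which the Ornstein--Uhlenbeck part becomes $\lambda$-independent, but this requires decay of $\nabla u_\lambda$ away from the origin and a genuine analysis of the transmission problem across $S$ with variable $\sigma$ --- none of which appears in your sketch. The stakes are quantitative, not cosmetic: if (ii) failed and the Lipschitz constant of $(I+\nabla u_\lambda)\sigma\circ\Phi^{-1}$ grew like $\sqrt{\lambda}$, its square would be of order $\lambda$ and could cancel the dissipativity $-\lambda\frac{1-\|\nabla u_\lambda\|_\infty}{1+\|\nabla u_\lambda\|_\infty}$ in the It\^o/Gronwall step, so the exponential decay \eqref{eq_inequality_difference} would not follow. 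Until (i) and (ii) are established with the stated $\lambda$-dependence, the reduction to the dissipative case, and hence the theorem, remains unproven.
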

\begin{rem}
The  values of $\Lambda$, $C_1$, $C_2$ will be defined in the proof.
\end{rem}
\begin{proof}
It can be checked (see \cite{Aryasova+17}, Theorem 4 and (51)) that $\vp_t(\cdot)$ is G\^{a}teaux differentiable in $L_p(\Omega,\cF,\P)$, $p>0$, i.e.,
for all $x\in\mathds{R}^d$ there exists $\nabla\vp_t(x)$ such that for all $v\in\mathds{R}^d$, $t\geq0$,
\begin{equation}\label{eq_derivative_main2}
\mathds{E}\left|\frac{\varphi_t(x+\varepsilon v)-\varphi_t(x)}{\varepsilon}-\nabla\vp_t(x)v\right|^p\to 0, \ \varepsilon\to 0.
\end{equation}
Moreover, the process $(\nabla\vp_t(x))_{t\geq0}$ is continuous in $x$ in $L_p(\Omega,\cF,\P)$, $p>0$. This imply that
for $x,y \in\R$,
\begin{equation}\label{eq_Newton_Leibnitz}
    \vp_t(y)-\vp_t(x)=\int_0^1\big(\nabla\vp_t(x+\xi(y-x)),y-x\big)d\xi,
\end{equation}
where $\nabla\vp_t(\cdot)$ is the 
 derivative.

Using \eqref{eq_Newton_Leibnitz} we get
\begin{equation}\label{eq_ineq_difference_of_solutions}
\mathds{E}|\vp_t(y)-\vp_t(x)|^p\leq \mathds{E}(\int_0^1 \big|(\nabla\vp_t(x+\xi(y-x)),y-x)\big|d\xi)^p
\leq |y-x|^p\cdot\sup_{z\in\R}\mathds{E}|\nabla\vp_t(z)|^p.
\end{equation}
So to obtain \eqref{eq_inequality_difference} we need to get an estimate for
$\sup_{z\in\R}\mathds{E}|\nabla\vp_t(z)|^p$. We consider the case $p=1$
only. The general case can be considered similarly.

If $\alpha_+(x)=\alpha_-(x), x\in S$,  then $Y_t(x):=\nabla\vp_t(x)$ is a solution to the SDE
\begin{equation}\label{eq_smooth_coef}
\left\{
\begin{aligned}
  dY_t(x)&=[-\lambda+\nabla\alpha(\vp_t(x))]Y_t(x)dt+\sum_{k=1}^{m}\nabla\sigma_{k}(\varphi_t(x))Y_t(x)dw_{k}(t), t\geq0,\\
Y_0(x)&=E,\\
\end{aligned}\right.
 \end{equation}
where $E$ is a $d\times d$-identity matrix.
  This formula is well known when $\alpha, \sigma\in C^1(\R)$. For Lipschitz continuous functions $\alpha$ and $\sigma$ the result can be found in \cite{Bouleau+2010}, Th. 3.3.1.
 \begin{rem} It follows from Rademacher's theorem that the Lipschitz continuous functions $\alpha$ and $\sigma$ are differentiable almost everywhere with respect to the Lebesgue measure. We define $\nabla\alpha(\vp_t(x))$, $\nabla\sigma(\vp_t(x))$  in an arbitrary way at the points where they do not exist. Since $\sigma$ is non-degenerate, the distribution of $\vp_t(x)$ is absolutely continuous. So $\nabla\alpha(\vp_t(x))$, $\nabla\sigma(\vp_t(x))$ are defined uniquely up to the set of probability zero.
\end{rem}

If $\alpha_+(x)\neq\alpha_-(x), x\in S,$ then the distributional derivative of $\alpha$ is equal to
  $$
  \nabla\alpha(x)+D(x)\delta_S, x\in\R.
  $$
  Here $\delta_S$ is the standard surface measure on $S$ (if $d=1$, $\delta_S(x)$ is the Dirac delta function),
  and
$$
D(x)=\begin{pmatrix}
    0 & \cdots & 0 &\alpha_{+}^1(x)-\alpha_-^1(x)  \\
    \vdots & \ddots & \vdots & \vdots \\
    0 & \cdots & 0 & \alpha_{+}^d(x)-\alpha_-^d(x)
  \end{pmatrix}, \ x\in S.
$$

Formally, in this case the integral form of equation \eqref{eq_smooth_coef} becomes
\begin{multline}\label{eq_with_delta_S}
 Y_t(x)=E+\int_0^t[-\lambda+\nabla\alpha(\vp_s(x))]Y_s(x)ds+\\
 \int_0^tD(\vp_s(x))Y_s(x)\delta_S(\vp_s(x))ds+\sum_{k=1}^{m}\int_0^t\nabla\sigma_{k}(\varphi_s(x))Y_s(x)dw_{k}(s).
\end{multline}

It was proved in \cite{Aryasova+17} that $Y_t(x)$ is a solution to equation \eqref{eq_with_delta_S}, where by
$$
\int_0^tD(\vp_s(x))Y_s(x)\delta_S(\vp_s(x))ds
$$
we mean the integral with respect to the local time of the process $(\vp_t(x))_{t\geq 0}$ on the hyperplane $S$:
$$
\int_0^tD(\vp_s(x))Y_s(x)dL_s^S(\vp(x)),
$$
where
\begin{equation*} \label{eq_local_time_S}
L_t^S(\vp(x)):=\llim_{\varepsilon\downarrow 0}\frac{1}{2\varepsilon}\int_0^t\mathds{1}_{|\langle\vp_{s}(x),e_d\rangle|\leq\varepsilon }ds, \ e_d=(0,0,\dots,0,1).
\end{equation*}
Note that the local time of the process $(\vp_t(x))_{t\geq 0}$ on the hyperplane $S$ coincides with the local time of the $d$-th coordinate of the process $(\vp_t(x))_{t\geq0}$ at the point $0$, which is defined by the formula
\begin{equation}\label{eq_local_time_0}
L_t^0(\vp^d(x))=\llim_{\varepsilon\downarrow 0}\frac{1}{2\varepsilon}\int_0^t \mathds{1}_{|\varphi_s^d(x)|\leq\varepsilon}ds.
\end{equation}


Then equation \eqref{eq_with_delta_S} can be rewritten as follows
\begin{multline}\label{eq_derivative_1}
 Y_t(x)=E+\int_0^t[-\lambda+\nabla\alpha(\vp_s(x))]Y_s(x)ds+\\
 \int_0^tD(\vp_s(x))Y_s(x)dL_s^0(\vp^d(x))+\sum_{k=1}^{m}\int_0^t\nabla\sigma_{k}(\varphi_s(x))Y_s(x)dw_{k}(s).
\end{multline}
It is known that there exists a unique strong solution to equation \eqref{eq_derivative_1} (see, for example, \cite{Protter04}, Ch. V, Th. 7).

Set
\begin{equation}\label{eq_K_alpha_def}
K_{\alpha}:=\esssup_{x\in\R}|\nabla\alpha(x)|,
\end{equation}
\begin{equation}\label{eq_K_sigma_def}
K_{\sigma}:=\esssup_{x\in\R}|\nabla\sigma(x)|.
\end{equation}
Here and below we denote by $|\cdot|$ both the Euclidean norm of vectors and the Hilbert-Schmidt norm of matrices. Note that (A2), (B2) are satisfied with $\tilde{K}_\alpha=K_{\alpha}$, $\tilde{K}_\sigma=K_{\sigma}$, respectively. Put
$$
\|D\|_\infty=\sup_{x\in S}|D(x)|.
$$
Define
$$
h(t)=(2\lambda-2K_{\alpha}-K_{\sigma}^2)t-2\|D\|_\infty L_t^0(\vp^d(x)).
$$
\begin{lem}\label{Lemma_moment_of_Y} For all $T>0$,
$$
\sup_{t\in[0,T]}\mathds{E}e^{h(t)}|Y_t(x)|^2\leq d.
$$
\end{lem}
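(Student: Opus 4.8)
The plan is to apply Itô's formula to the process $e^{h(t)}|Y_t(x)|^2$ and show that its drift is non-positive, so that the expectation is non-increasing and hence bounded by its initial value $|Y_0(x)|^2 = |E|^2 = d$ (the Hilbert–Schmidt norm of the $d\times d$ identity matrix). This is the standard Lyapunov/exponential-weight method for controlling the second moment of a linearized flow: the weight $e^{h(t)}$ is engineered precisely so that the negative drift coming from the $-\lambda$ term, together with the Itô correction, dominates the positive contributions from $\nabla\alpha$, $\nabla\sigma$, and the singular local-time term.

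**First I would** compute the differential of $|Y_t|^2 = \langle Y_t, Y_t\rangle$ (interpreting this as the sum of squares of entries, consistent with the Hilbert–Schmidt norm) using Itô's formula applied to the matrix SDE \eqref{eq_derivative_1}. The continuous-finite-variation part contributes $2\langle Y_s, (-\lambda + \nabla\alpha(\vp_s))Y_s\rangle\,ds$, the local-time term contributes $2\langle Y_s, D(\vp_s)Y_s\rangle\,dL_s^0$, the martingale part contributes a sum of stochastic integrals with respect to $w_k$, and the Itô correction from the quadratic variation of the martingale part contributes $\sum_k |\nabla\sigma_k(\vp_s)Y_s|^2\,ds$. **Then I would** bound each term: using Cauchy–Schwarz and the definitions \eqref{eq_K_alpha_def}, \eqref{eq_K_sigma_def} together with $\|D\|_\infty$, one gets $\langle Y_s, \nabla\alpha(\vp_s)Y_s\rangle \le K_\alpha |Y_s|^2$, $\sum_k|\nabla\sigma_k(\vp_s)Y_s|^2 \le K_\sigma^2 |Y_s|^2$, and $\langle Y_s, D(\vp_s)Y_s\rangle \le \|D\|_\infty |Y_s|^2$.

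**Next I would** incorporate the weight. Since $h(t)$ is itself a process of finite variation (an absolutely continuous part plus the local-time term $-2\|D\|_\infty L_t^0$), I would apply the product rule (Itô for a finite-variation factor times an Itô process) to $e^{h(t)}|Y_t|^2$. The absolutely continuous part of $dh$ is $(2\lambda - 2K_\alpha - K_\sigma^2)\,dt$, and the local-time part is $-2\|D\|_\infty\,dL_t^0$; multiplying by $e^{h(t)}|Y_t|^2$ and combining with the estimates above, the $dt$-drift becomes $e^{h(s)}|Y_s|^2\big[\,2(-\lambda + K_\alpha) + K_\sigma^2 + (2\lambda - 2K_\alpha - K_\sigma^2)\,\big]\,ds \le 0$ and the $dL^0_s$-drift becomes $e^{h(s)}|Y_s|^2\big[\,2\|D\|_\infty - 2\|D\|_\infty\,\big]\,dL_s^0 = 0$. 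Both non-martingale contributions are therefore non-positive (in fact the local-time terms cancel exactly, which is the whole point of the construction of $h$). Taking expectations kills the martingale part—here one must first argue integrability, e.g.\ via a standard localization by stopping times $\tau_n = \inf\{t: |Y_t| \ge n\}$ followed by monotone/dominated convergence, using that $e^{h(t)}$ is bounded on $[0,T]$ because $h(t) \le (2\lambda - 2K_\alpha - K_\sigma^2)t$ and $L_t^0 \ge 0$—to conclude $\E\big[e^{h(t)}|Y_t|^2\big] \le \E\big[|Y_0|^2\big] = d$ for every $t \in [0,T]$, and hence the supremum over $[0,T]$ is bounded by $d$.

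**The main obstacle** I anticipate is the rigorous justification of the stochastic calculus when the local-time term is present and the coefficients $\nabla\alpha, \nabla\sigma$ are merely essentially bounded rather than continuous. One must verify that Itô's formula is legitimately applicable to equation \eqref{eq_derivative_1}—which is a linear SDE driven by both $dt$, $dL_t^0$, and $dw_k$—and that the product rule correctly handles the finite-variation weight $e^{h(t)}$ whose driving measure $dL_t^0$ is singular with respect to Lebesgue measure. Since $L_t^0$ is a continuous increasing process, there is no jump correction, so the classical Itô formula for continuous semimartingales applies, and the cross-variation between the weight (finite variation) and $|Y_t|^2$ vanishes; nevertheless one should be careful that the integrals against $dL_t^0$ are well-defined, which is guaranteed by the results cited from \cite{Aryasova+17} and \cite{Protter04}. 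The cancellation of the local-time contributions is algebraically immediate once the setup is correct, so the real work lies in this measure-theoretic bookkeeping and the integrability/localization argument needed to discard the martingale term.
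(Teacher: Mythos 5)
Your proposal is correct and follows essentially the same route as the paper's own proof: It\^{o}'s formula applied to $e^{h(t)}|Y_t(x)|^2$, Cauchy--Schwarz bounds via $K_\alpha$, $K_\sigma$, $\|D\|_\infty$ so that the $dt$-drift is non-positive and the $dL^0$-terms cancel exactly, localization to make the stochastic integral a true martingale, and a passage to the limit (the paper stops at $\tau^N=\inf\{t: t+\int_0^t|Y_s(x)|^2ds+L^0_t(\vp^d(x))\geq N\}$ rather than at $\inf\{t:|Y_t(x)|\geq n\}$, but this is immaterial). The only small slip is naming the final limit theorem: neither monotonicity nor domination is available, so the correct tool is Fatou's lemma, which is what the paper uses.
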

The proof of Lemma follows from It\^{o}'s formula. For details, see Appendix.
\vskip 20 pt

Using the H\"older inequality we obtain
\begin{multline}\label{eq_Y_moment}
\mathds{E}|Y_t(x)|\leq \left(\mathds{E} e^{h(t)}|Y_t(x)|^2\right)^{1/2}\left(\mathds{E}e^{-h(t)}\right)^{1/2}\leq \\
d^{1/2} e^{\left(-\lambda+K_{\alpha}+\frac12K_{\sigma}^2\right)t}\left(\mathds{E}e^{2\|D\|_\infty L_t^0(\vp^d(x))}\right)^{1/2}.
\end{multline}

\begin{lem}\label{Lemma_estimate_local_time} For each $t>0$,
\begin{equation}\label{eq_times_inequality_2}
\sup_{x\in\R}\mathds{E}L_t^0(\vp^d(x))\leq \frac{\rho(t,\lambda)}{B_\sigma},
\end{equation}
where
$$
\rho(t,\lambda)=\|\alpha^d\|_{\infty}t+\left(1+\frac{2}{3}\lambda t\right)\sqrt{\left(\frac{\|\alpha^d\|_{\infty}^2}{2\lambda}+\|\sigma\|_{\infty}^2\right)t}
$$
and $\|\alpha\|_\infty=\sup_{x\in\mathbb{R}^d}|\alpha(x)|$, $\|\sigma\|_{\infty}=\sup_{x\in\R}|\sigma(x)|$, $B_\sigma$ is the uniform ellipticity constant from equation \eqref{eq_ellipticity}.
\end{lem}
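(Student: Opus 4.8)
My plan is to reduce the statement to a scalar computation for the process $X_s:=\varphi_s^d(x)$, which by \eqref{eq_main} solves $dX_s=\bigl(-\lambda X_s+\alpha^d(\varphi_s(x))\bigr)ds+\sum_{k=1}^m\sigma_k^d(\varphi_s(x))\,dw_k(s)$, $X_0=x^d$. This is a continuous semimartingale with $d\langle X\rangle_s=a(\varphi_s(x))\,ds$, where $a:=\sum_{k=1}^m(\sigma_k^d)^2=(\sigma\sigma^\ast)_{dd}$. The first step is to pass from the occupation‑time local time \eqref{eq_local_time_0}, normalised with respect to $ds$, to the semimartingale local time $\hat L_t^0$, normalised with respect to $d\langle X\rangle_s$. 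Taking $\theta=e_d$ in \eqref{eq_ellipticity} gives $a(\varphi_s(x))\ge B_\sigma$ pointwise, while (B1) gives $a\le\|\sigma\|_\infty^2$. Since $\hat L_t^0=\llim_{\varepsilon\downarrow0}\frac1{2\varepsilon}\int_0^t\mathds 1_{|X_s|\le\varepsilon}\,d\langle X\rangle_s$, the lower bound $a\ge B_\sigma$ yields the pathwise inequality $L_t^0(\varphi^d(x))\le B_\sigma^{-1}\hat L_t^0$, so it suffices to bound $\E\hat L_t^0$ by $\rho(t,\lambda)$.

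Next I would apply Tanaka's formula to $|X_t|$, namely $\hat L_t^0=|X_t|-|x^d|-\int_0^t\operatorname{sgn}(X_s)\,dX_s$. Since $\sigma$ is bounded the stochastic integral is a true martingale, so taking expectations and using $\operatorname{sgn}(X_s)X_s=|X_s|$ gives $\E\hat L_t^0=\E|X_t|-|x^d|+\lambda\E\int_0^t|X_s|\,ds-\E\int_0^t\operatorname{sgn}(X_s)\alpha^d(\varphi_s(x))\,ds$. The last term is bounded in absolute value by $\|\alpha^d\|_\infty t$, which produces the first summand of $\rho$.

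The decisive point is that the remaining quantity $\E|X_t|-|x^d|+\lambda\E\int_0^t|X_s|\,ds$ must be controlled \emph{uniformly} in the starting point, although each term separately grows with $|x^d|$. To expose the cancellation I would use the variation‑of‑constants representation $X_s=e^{-\lambda s}x^d+R_s$, where $R_s$ solves $dR_s=-\lambda R_s\,ds+\alpha^d(\varphi_s(x))\,ds+dM_s$ with $R_0=0$ and $M_s=\sum_k\int_0^s\sigma_k^d(\varphi_u(x))\,dw_k(u)$. From $|X_s|\le e^{-\lambda s}|x^d|+|R_s|$ the coefficients of $|x^d|$ cancel exactly, since $e^{-\lambda t}+\lambda\int_0^t e^{-\lambda s}\,ds-1=0$, leaving $\E|X_t|-|x^d|+\lambda\E\int_0^t|X_s|\,ds\le\E|R_t|+\lambda\int_0^t\E|R_s|\,ds$, an expression independent of $x$. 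Finally I would estimate $R$ in $L^2$: Itô's formula for $R_s^2$ together with the Young inequality $2R_s\alpha^d\le 2\lambda R_s^2+\frac{(\alpha^d)^2}{2\lambda}$ makes the dissipative term cancel and gives $\frac{d}{ds}\E R_s^2\le\frac{\|\alpha^d\|_\infty^2}{2\lambda}+\|\sigma\|_\infty^2=:C$, hence $\E R_s^2\le Cs$ and $\E|R_s|\le\sqrt{Cs}$. Then $\E|R_t|+\lambda\int_0^t\sqrt{Cs}\,ds=(1+\tfrac23\lambda t)\sqrt{Ct}$, exactly the second summand of $\rho$; combining everything yields $\E L_t^0\le B_\sigma^{-1}\rho(t,\lambda)$ uniformly in $x$.

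The main obstacle is precisely this uniformity in $x$: bounding $\E|X_t|$ and $\lambda\int_0^t\E|X_s|\,ds$ separately through $\sqrt{\E X_s^2}$ leaves in each a contribution proportional to $|x^d|$ that does not vanish, so the crucial idea is to keep the two terms together and use the exact cancellation furnished by the variation‑of‑constants formula. A secondary technical point is the passage between the two normalisations of local time and the vanishing of the mean of the Tanaka stochastic integral; both are consequences of the ellipticity and boundedness assumptions.
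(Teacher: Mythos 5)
Your proposal is correct, and it reaches the stated bound $\rho(t,\lambda)/B_\sigma$ by a genuinely different route at the one point where the proof is delicate, namely uniformity in the starting point. The paper gets uniformity probabilistically: since $L_t^0(\vp^d(x))$ does not increase before $(\vp_t(x))$ hits $S$, the strong Markov property reduces everything to starting points $\tilde x\in S$, i.e.\ to $x^d=0$, after which the one-sided Tanaka formula (with $(\cdot)^+$) and It\^o's formula for $(\vp_t^d(\tilde x))^2$ (maximizing the quadratic drift expression $-2\lambda z^2+2\|\alpha^d\|_\infty|z|+\|\sigma\|_\infty^2$) give the moment bounds. You instead keep a general starting point, use the symmetric Tanaka formula for $|X_t|$, and obtain uniformity analytically through the variation-of-constants splitting $X_s=e^{-\lambda s}x^d+R_s$: the combination $\E|X_t|-|x^d|+\lambda\int_0^t\E|X_s|\,ds$ produced by Tanaka is exactly the one in which the $|x^d|$-contributions cancel, since $e^{-\lambda t}+\lambda\int_0^te^{-\lambda s}ds-1=0$. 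Your Young-inequality estimate $\frac{d}{ds}\E R_s^2\le\frac{\|\alpha^d\|_\infty^2}{2\lambda}+\|\sigma\|_\infty^2$ is the same computation as the paper's maximization, yielding the same constant. Each approach buys something: the paper's reduction to $S$ is conceptually shorter but leans on the strong Markov property of the flow; yours is pathwise and self-contained, and the symmetric form of Tanaka's formula is in fact tighter -- if one assembles the paper's own term-by-term estimates ($2\E(\vp^d_t)^+\le 2\sqrt{Ct}$ plus inequality \eqref{eq_indicator_phi} multiplied by $2$), the one-sided version delivers $2\rho(t,\lambda)$ rather than $\rho(t,\lambda)$, whereas your $|x|$-based version gives precisely $\rho(t,\lambda)$ because the linear parts $X_t-\int_0^t dX_s$ cancel identically instead of being estimated separately. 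Two routine points you gloss over, which the paper also treats lightly, are the identification of the symmetric local time with the $d\langle X\rangle$-occupation limit (valid here since the level $0$ is not charged by the drift, by ellipticity) and the localization/Fatou step needed to take expectations in It\^o's formula for $R_s^2$; both are standard under (A1), (B1), (B3).
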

To prove the Lemma we use Tanaka's formula. See Appendix for details.
\vskip 20 pt

It is well known that  $L_t^S(\vp(x))$ is a W-functional of the Markov process $(\vp_t(x))_{t\geq0}$ (see \cite{Dynkin63}, Ch. 6--8 for theory and terminology). Then the following estimates on the moments of $L_t^S(\vp(x))$ are true.
\begin{prp}[\cite{Gikhman+04_II}, Ch. II, \S 6, Lemma 3]\label{Proposition_Gikhman} For all $n\geq1$, $t>0$,
$$
\sup_{x\in\R}\mathds{E}\left(L_t^S(\vp(x))\right)^n\leq n!\left(\sup_{x\in\R}\mathds{E}L_t^S(\vp(x))\right)^n.
$$
\end{prp}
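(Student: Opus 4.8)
The plan is to recognize $L^S_t(\vp(x))$ as a continuous, nonnegative, nondecreasing additive functional (a W-functional) of the time-homogeneous strong Markov process $(\vp_t(x))_{t\ge0}$, and to run the classical moment recursion for such functionals. Writing $\E_x$ for expectation when $\vp_0=x$, the additivity property reads $L^S_t=L^S_s+L^S_{t-s}\circ\theta_s$ for $0\le s\le t$, where $\theta_s$ is the shift, and the characteristic $F(t):=\sup_{x\in\R}\E_x L^S_t(\vp(x))$ is finite (by Lemma~\ref{Lemma_estimate_local_time}) and nondecreasing in $t$. Setting $A_n(t):=\sup_{x\in\R}\E_x\big(L^S_t(\vp(x))\big)^n$, the target inequality is exactly $A_n(t)\le n!\,F(t)^n$, and I would obtain it by establishing the recursion $A_n(t)\le n\,F(t)\,A_{n-1}(t)$ and inducting from $A_0\equiv1$, $A_1(t)=F(t)$.

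For the recursion, fix $x$ and abbreviate $L_s:=L^S_s(\vp(x))$. Since $s\mapsto L_s$ is continuous and nondecreasing, the Stieltjes chain rule applied to $s\mapsto(L_t-L_s)^n$ gives the elementary identity
$$
L_t^n=n\int_0^t (L_t-L_s)^{n-1}\,dL_s .
$$
I would then take $\E_x$ and, using that the increment $L_t-L_s=L_{t-s}\circ\theta_s$ depends only on the post-$s$ evolution, invoke the strong Markov property to compute the conditional expectation
$$
\E_x\big[(L_t-L_s)^{n-1}\mid\cF_s\big]=\E_{\vp_s}\big[L_{t-s}^{\,n-1}\big]\le A_{n-1}(t-s)\le A_{n-1}(t),
$$
the last step because $A_{n-1}$ is nondecreasing. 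Replacing the anticipating factor $(L_t-L_s)^{n-1}$ by this bound under the measure $dL_s$ yields
$$
\E_x L_t^n\le n\,A_{n-1}(t)\,\E_x\!\int_0^t dL_s=n\,A_{n-1}(t)\,\E_x L_t\le n\,A_{n-1}(t)\,F(t),
$$
and taking the supremum over $x$ gives $A_n(t)\le n\,F(t)\,A_{n-1}(t)$, which closes the induction.

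The step that needs care is the substitution of $\E_x[(L_t-L_s)^{n-1}\mid\cF_s]$ for $(L_t-L_s)^{n-1}$ inside the integral against the random, path-dependent measure $dL_s$: the integrand is anticipating, so this is not a plain Fubini argument but a predictable-projection statement. I would justify it through the optional/predictable projection theorem, relying on the fact that $s\mapsto L_s$ is continuous and adapted (so its Stieltjes measure is carried by a predictable set), whence integration against $dL_s$ only ``sees'' the $\cF_s$-conditional expectation of the future factor. This is precisely where the additive structure of $L^S$ and the strong Markov property of $(\vp_t(x))$ are indispensable, and it is the heart of the Gikhman--Skorokhod estimate; the remaining identity and the induction are routine.
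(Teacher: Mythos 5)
Your proof is correct, but be aware that the paper itself contains no proof of this proposition: it is imported verbatim, as a known fact about W-functionals, from Gikhman--Skorokhod (Ch.~II, \S 6, Lemma~3), with the finiteness of the characteristic $F(t)=\sup_{x}\E L_t^S(\vp(x))$ supplied separately by Lemma~\ref{Lemma_estimate_local_time}. What you have written is essentially a reconstruction of the classical argument behind that citation: the Stieltjes identity $L_t^n=n\int_0^t (L_t-L_s)^{n-1}\,dL_s$ (valid because $L$ is continuous, nondecreasing, $L_0=0$), the additivity relation \eqref{eq_homogeneous_additive} combined with the Markov property at deterministic times (the strong Markov property is not actually needed) to get $\E\bigl[(L_t-L_s)^{n-1}\mid\cF_s\bigr]\le A_{n-1}(t)$, and the recursion $A_n(t)\le n\,F(t)\,A_{n-1}(t)$ closed by induction. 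The one point where your route differs from the classical texts is the justification of replacing the anticipating integrand by its conditional expectation under $dL_s$: you invoke the predictable/optional projection theorem, which is legitimate here since $L$ is continuous, adapted and increasing (hence predictable) and the augmented Brownian filtration satisfies the usual conditions with $\cF_{s-}=\cF_s$, so the two projections coincide. The classical alternative is elementary and avoids projection machinery altogether: bound the integral from below by right-endpoint Riemann--Stieltjes sums $\sum_k (L_t-L_{s_{k+1}})^{n-1}(L_{s_{k+1}}-L_{s_k})$ (monotonicity of $s\mapsto (L_t-L_s)^{n-1}$ gives the inequality in the needed direction), condition each term on $\cF_{s_{k+1}}$, where the two factors now separate cleanly, and pass to the limit along refining partitions using continuity of $L$ and Fatou's lemma. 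Your version buys brevity at the cost of heavier general theory; the discretization buys self-containedness, which is presumably why Gikhman--Skorokhod, and Khas'minskii-type lemmas generally, are stated and proved that way.
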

Since $L_t^0(\vp^d(x))=L_t^S(\vp(x))$, then using Proposition \ref{Proposition_Gikhman} and Lemma \ref{Lemma_estimate_local_time}  we obtain the following modification of Khas'minskii's Lemma (see \cite{Khasminskii59} or \cite{Sznitman98}, Ch.1 Lemma 2.1).
\begin{lem}\label{Lemma_estimation_expectation}
 Let $t_0>0$ be such that $\frac{2\|D\|_\infty\rho(t_0,\lambda)}{B_{\sigma}}<1$ and \eqref{eq_times_inequality_2} hold. Then for all $t\leq t_0$,
 \begin{equation}\label{eq_estimate_expect_exponent}
   \sup_{x\in\R}\mathds{E}e^{2\|D\|_\infty L_t^0(\vp^d(x))}\leq \frac{1}{1-\frac{2\|D\|_\infty}{B_{\sigma}}\rho(t_0, \lambda)}.
 \end{equation}
 \end{lem}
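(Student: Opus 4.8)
The plan is to follow the classical Khas'minskii argument: expand the exponential as a power series, control each moment of the local time through the two results already established, and recognise the outcome as a convergent geometric series. To lighten notation write $a=2\|D\|_\infty$ and abbreviate $L_t=L_t^0(\vp^d(x))=L_t^S(\vp(x))$, the two being equal as noted above. Since $L_t\geq 0$, the pointwise expansion $e^{aL_t}=\sum_{n\geq0}(aL_t)^n/n!$ has nonnegative terms, so Tonelli's theorem permits interchanging expectation and summation: for every fixed $x$,
$$
\E e^{aL_t}=\sum_{n=0}^\infty\frac{a^n}{n!}\,\E (L_t)^n .
$$

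The second step is to bound each term uniformly in $x$. Applying Proposition \ref{Proposition_Gikhman} under the identification $L_t=L_t^S(\vp(x))$ gives $\sup_x\E(L_t)^n\le n!\,(\sup_x\E L_t)^n$, and Lemma \ref{Lemma_estimate_local_time} furnishes $\sup_x\E L_t\le\rho(t,\lambda)/B_\sigma$. Here I would observe that $\rho(\cdot,\lambda)$ is nondecreasing on $(0,\infty)$, since each summand in its definition is an increasing function of $t$; hence $\rho(t,\lambda)\le\rho(t_0,\lambda)$ for $t\le t_0$. Combining these yields
$$
\sup_x\E(L_t)^n\le n!\left(\frac{\rho(t_0,\lambda)}{B_\sigma}\right)^n .
$$

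Substituting this into the series and cancelling the factorials, I obtain for $t\le t_0$
$$
\sup_x\E e^{aL_t}\le\sum_{n=0}^\infty\left(\frac{a\,\rho(t_0,\lambda)}{B_\sigma}\right)^n=\sum_{n=0}^\infty\left(\frac{2\|D\|_\infty\rho(t_0,\lambda)}{B_\sigma}\right)^n .
$$
By the standing hypothesis $2\|D\|_\infty\rho(t_0,\lambda)/B_\sigma<1$ this geometric series converges to $\bigl(1-2\|D\|_\infty\rho(t_0,\lambda)/B_\sigma\bigr)^{-1}$, which is exactly the right-hand side of \eqref{eq_estimate_expect_exponent}. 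Passing to the supremum in $x$ after the summation is legitimate because the final bound no longer depends on $x$.

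The argument is essentially routine once Proposition \ref{Proposition_Gikhman} and Lemma \ref{Lemma_estimate_local_time} are available, so I do not expect a serious obstacle. The only two points demanding care are the term-by-term passage of the expectation through the series, which is handled by nonnegativity of $L_t$ together with Tonelli's theorem, and the monotonicity of $\rho(\cdot,\lambda)$, which is what allows $\rho(t,\lambda)$ to be replaced by $\rho(t_0,\lambda)$ uniformly for $t\le t_0$ and thereby upgrades the estimate from a single time to a bound valid on the whole interval $[0,t_0]$.
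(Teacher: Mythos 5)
Your proof is correct and is precisely the argument the paper has in mind: the paper obtains the lemma by combining Proposition \ref{Proposition_Gikhman} and Lemma \ref{Lemma_estimate_local_time} via the classical Khas'minskii series expansion, which you have simply written out in full (including the two points the paper leaves tacit, namely Tonelli's theorem for the term-by-term expectation and the monotonicity of $\rho(\cdot,\lambda)$ that extends the bound from $t_0$ to all $t\leq t_0$). No gaps; this matches the intended proof.
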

 \vskip 20 pt

Using the inequality \eqref{eq_estimate_expect_exponent} we can estimate the right-hand side of \eqref{eq_Y_moment} for small $t$.

Consider now an arbitrary $t>0$. Put $n=\left[\frac{t}{t_0}\right]+1$, and
$$
s_0=0, s_1=t_0, \dots, s_k=kt_0, \dots, s_{n-1}=(n-1)t_0, s_n=t.
$$
We have
\begin{multline*}
\mathds{E}e^{2\|D\|_\infty{L}_t^0(\vp^d(x))}=\mathds{E}\prod_{k=0}^{n-1}e^{2\|D\|_\infty\left({L}_{s_{k+1}}^0(\vp^d(x))-{L}_{s_k}^0(\vp^d(x))\right)}=\\
\mathds{E}\left(\mathds{E}\left[\prod_{k=0}^{n-1}e^{2\|D\|_\infty\left({L}_{s_{k+1}}^0(\vp^d(x))-{L}_{s_k}^0(\vp^d(x))\right)}\big|\cal{F}_{s_{n-1}}\right]\right)=\\
\mathds{E}\left\{\prod_{k=0}^{n-2}e^{2\|D\|_\infty\left({L}_{s_{k+1}}^0(\vp^d(x))-{L}_{s_k}^0(\vp^d(x))\right)}\right.
\mathds{E}\left.\left(e^{2\|D\|_\infty\left({L}_{s_{n}}^0(\vp^d(x))-{L}_{s_{n-1}}^0(\vp^d(x))\right)}\big|\cal{F}_{s_{n-1}}\right)\right\}.
\end{multline*}
It is not hard to see that
\begin{equation}\label{eq_homogeneous_additive}
P\{{L}_{t+s}^S(\vp(x))={L}_{s}^S(\vp(x))+\theta_s{L}_{t}^S(\vp(x)), \ s\geq0, t\geq0\}=1,
\end{equation}
where $\theta$ is the shift operator.

Using \eqref{eq_homogeneous_additive} and Lemma \ref{Lemma_estimation_expectation} we get for $k=1,\dots,n$,
\begin{multline*}
\mathds{E}\left(e^{2\|D\|_\infty\left({L}_{s_{k}}^0(\vp^d(x))-{L}_{s_{k-1}}^0(\vp^d(x))\right)}\big|\cal{F}_{s_{k-1}}\right)=
\mathds{E}\left(e^{2\|D\|_\infty\theta_{s_{k-1}}L_{s_{k}-s_{k-1}}^0(\vp^d(x))}\big|\cal{F}_{s_{k-1}}\right)\leq \\ \sup_{z\in\R}\mathds{E}e^{2\|D\|_\infty L_{s_{k}-s_{k-1}}^0(\vp^d(z))}\leq\sup_{z\in\R}\mathds{E}e^{2\|D\|_\infty L_{t_{0}}^0(\vp^d(z))}\leq \frac{1}{1-\frac{2\|D\|_\infty}{B_{\sigma}}\rho(t_0, \lambda)} \ \mbox{a.s.}
\end{multline*}
Then

\begin{multline}\label{eq_local_time_moment}
\mathds{E}e^{2\|D\|_\infty{L}_t^0(\vp^d(x))}\leq \frac{1}{1-\frac{2\|D\|_\infty}{B_{\sigma}}\rho(t_0,\lambda)}\mathds{E}\prod_{k=0}^{n-2}e^{2\|D\|_\infty\left({L}_{s_{k+1}}^0(\vp^d(x))-{L}_{s_k}^0(\vp^d(x))\right)}\leq\dots\leq\\
\frac{1}{\left(1-\frac{2\|D\|_\infty}{B_{\sigma}}\rho(t_0,\lambda)\right)^n}= e^{-n\ln\left(1-\frac{2\|D\|_\infty}{B_{\sigma}}\rho(t_0,\lambda)\right)}=e^{-\left(\left[\frac{t}{t_0}\right]+1\right)\ln\left(1-\frac{2\|D\|_\infty}{B_{\sigma}}\rho(t_0,\lambda)\right)}.
\end{multline}
The right-hand side of \eqref{eq_local_time_moment} does not depend on $x$. So we have
\begin{equation}
\sup_{x\in\R}\mathds{E}e^{2\|D\|_\infty L_t^0(\vp^d(x))}\leq e^{-\left(\left[\frac{t}{t_0}\right]+1\right)\ln\left(1-\frac{2\|D\|_\infty}{B_{\sigma}}\rho(t_0,\lambda)\right)}.
\end{equation}
Substituting this inequality into \eqref{eq_Y_moment} we get the following inequality for any $t>0:$
\begin{multline}\label{eq_sup_Y}
\sup_{x\in\R}\mathds{E}|Y_t(x)|\leq
d^{1/2} e^{\left(-\lambda+K_{\alpha}+\frac12K_{\sigma}^2\right)t}\sup_{x\in\R}
\left(\mathds{E}e^{2\|D\|_\infty L_t^0(\vp^d(x))}\right)^{1/2}\leq \\
d^{1/2}e^{\left(-\lambda+K_{\alpha}+\frac12K_{\sigma}^2\right)t-\frac{1}{2}
\left(\frac{t}{t_0}+1\right)\ln\left(1-\frac{2\|D\|_\infty}{B_{\sigma}}\rho(t_0,\lambda)\right)}
=\\
d^{1/2}\frac{1}{\sqrt{1-\frac{2\|D\|_\infty}{B_{\sigma}}\rho(t_0,\lambda)}}
e^{\left(-\lambda+K_{\alpha}+\frac12K_{\sigma}^2
-\frac{1}{2t_0} \ln\left(1-\frac{2\|D\|_\infty}{B_{\sigma}}\rho(t_0,\lambda)\right)\right)t}.
\end{multline}
First, assume that $\Lambda\geq 1/2$. Then for all $\lambda>\Lambda$ and $t\geq 0$,
$$
\rho(t,\lambda)\leq\left(1+\frac{2}{3}\lambda t\right)\sqrt{\left(||\alpha^d||_{\infty}^2+||\sigma||_{\infty}^2\right)t}+
||\alpha^d||_{\infty}t.
$$
From \eqref{eq_sup_Y} we obtain
\begin{equation}\label{eq_estimate_Y}
\sup_{x\in\R}\mathds{E}|Y_t(x)|\leq C_1(\lambda,\alpha,\sigma)
e^{\left(-\lambda+K-\frac{1}{2t_0}\ln\left(1-K_1t_0^{1/2}-K_2t_0-K_3\lambda t_0^{3/2}\right)\right)t},
\end{equation}
where
\begin{equation}\label{eq_C_1}
C_1(\lambda,\alpha,\sigma)=d^{1/2}\frac{1}{\sqrt{1-\frac{2\|D\|_\infty}{B_{\sigma}}\rho(t_0,\lambda)}},
\end{equation}
$$
K=K_{\alpha}+\frac12K_{\sigma}^2,
$$
$$
K_1=\frac{2\|D\|_\infty}{B_{\sigma}}\sqrt{\left(||\alpha^d||_{\infty}^2+||\sigma||_{\infty}^2\right)};
\ \
K_2=\frac{2\|D\|_\infty}{B_{\sigma}}||\alpha^d||_{\infty}; \ \
 K_3=\frac{4\|D\|_\infty}{3B_{\sigma}}\sqrt{\left(||\alpha^d||_{\infty}^2+||\sigma||_{\infty}^2\right)}.
$$

If we show that
$$
\exists \Lambda\geq1/2\ \forall \lambda>\Lambda\ \exists t_0=t_0(\lambda)>0: 0\ \ \
2t_0(-\lambda+K)-\ln(1-K_1\lambda t_0^{3/2}-K_2t_0-K_3t_0^{1/2})<0,
$$
then \eqref{eq_inequality_difference} will follow from \eqref{eq_ineq_difference_of_solutions} and \eqref{eq_estimate_Y}.

Note that for all $\lambda\geq \frac43 K$,
\begin{equation}\label{eq_for_lambda_1}
2t_0(-\lambda+K)\leq -\frac{\lambda t_0}{2}.
\end{equation}
Further, it is easy to see that there exist $\delta>0$ such that
\begin{equation}\label{eq_for_lambda_2}
-\frac12-\ln\left(1-K_2\delta-(K_1+K_3){\delta}^{1/2}\right)<0.
\end{equation}
Put $\Lambda=\max\left\{\frac12, \frac43K,\frac{1}{\delta}\right\}$. Now for each $\lambda>\Lambda$ we can choose $t_0=t_0(\lambda)>0$, which satisfies conditions of Lemma \ref{Lemma_estimation_expectation} and such that $t_0<\frac{1}{\lambda}$. In particular, this implies that $t_0<\delta$. Then using \eqref{eq_for_lambda_1}, \eqref{eq_for_lambda_2} we get
\begin{multline*}
2t_0(-\lambda+K)-\ln(1-K_1\lambda t_0^{3/2}-K_2t_0-K_3t_0^{1/2})\leq\\
-\frac{\lambda t_0}{2}-\ln(1-K_1(\lambda t_0)t_0^{1/2}-K_2 t_0-K_3 t_0^{1/2})\leq\\
-\frac12-\ln(1-K_1t_0^{1/2}-K_2 t_0-K_3 t_0^{1/2})\leq\\
-\frac12-\ln\left(1-K_2\delta-(K_1+K_3){\delta}^{1/2}\right)<0.
\end{multline*}

Hence, if $\lambda>\Lambda$, $t_0$ satisfies the conditions of Lemma \ref{Lemma_estimation_expectation}, and $t_0<\frac1\lambda$, then  there exist $C_1=C_1(\lambda,\alpha,\sigma)>0$ defined by \eqref{eq_C_1} and
$$
C_2=C_2(\lambda,\alpha,\sigma)=2t_0(\lambda-K)+\ln(1-K_1\lambda t_0^{3/2}-K_2t_0-K_3t_0^{1/2})>0
$$
such that the inequality
\[
\sup_{x\in\R}\mathds{E}|Y_t(x)|\leq C_1 e^{-C_2t}
\]
holds.

Similarly we can get the estimate
\begin{equation}\label{eq_Y_to_0}
\sup_{x\in\R}\mathds{E}|Y_t(x)|^p\leq C_1(p) e^{-C_2(p)t}
\end{equation}
for any $p\geq 1$.

Substituting \eqref{eq_Y_to_0} into  \eqref{eq_ineq_difference_of_solutions} we get
\eqref{eq_inequality_difference}.
\end{proof}

\section {Stationary solution}
Let $(\tilde{w}_{1}(t),\dots,\tilde{w}_{m}(t))_{t\geq0}$ and
$(\hat{w}_{1}(t),\dots,\hat{w}_{m}(t))_{t\geq0}$ be standard independent $m$-dimensional Wiener processes.
For $1\leq k\leq m$  define two-sided Brownian motions:
\begin{equation*}
  w_k(t)=\left\{
\begin{aligned}
&\tilde{w}_k(t), t\geq 0,\\
&\hat{w}_k(-t), t<0.
\end{aligned}\right.
\end{equation*}
Let $\cF_t$ be the augmentation of $\sigma$-algebra generated by $\{w_k(s), \ s\leq t,\
1\leq k\leq m\}.$

Consider a $d$-dimensional SDE
\begin{equation}\label{eq_main_stationary}
d\vp_t=\left(-\lambda\vp_t+\alpha(\vp_t)\right)dt+\sum_{k=1}^{m}\sigma_{k}(\vp_t)dw_{k}(t), \ t\in\mathds{R},
\end{equation}
where  $\lambda, \alpha$, $\sigma$ satisfy the conditions of Theorem \ref{Theorem_convergence}.

\begin{dfn}
  We say that $\cF_t$-adapted continuous process $(\vp_t)_{t\in\mathds{R}}$ is a stationary solution to equation \eqref{eq_main_stationary} if for all $s, t\in\mathds{R}$ such that $s\leq t$,
  $$
  \vp_t=\vp_s+\int_s^t\left(-\lambda\vp_u+\alpha(\vp_u)\right)du+\sum_{k=1}^{m}\int_s^t\sigma_{k}(\vp_u)dw_{k}(u) \ \mbox{a.s.},
  $$
  and the process $(\vp_t)_{t\in\mathds{R}}$ is strictly stationary.
\end{dfn}
\begin{thm}
Let $\lambda, \alpha, \sigma$ satisfy the conditions of Theorem \ref{Theorem_convergence}. Then there exists a unique stationary solution to equation \eqref{eq_main_stationary}.
\end{thm}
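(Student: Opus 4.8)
The plan is to construct the stationary solution by a pullback (backward-in-time) limit and to deduce uniqueness from the contraction already established in Theorem~\ref{Theorem_convergence}. Write $\Phi_{s,t}(x)$ for the value at time $t$ of the solution of \eqref{eq_main_stationary} started from $x$ at time $s\le t$; this is well defined for $s,t\in\mathds{R}$ because the two-sided Wiener process $w$ has stationary, independent increments, and the flow satisfies the cocycle identity $\Phi_{s,t}(x)=\Phi_{r,t}(\Phi_{s,r}(x))$ for $s\le r\le t$. Fixing a reference point $x_0$, the candidate stationary solution is the pullback limit $\vp_t:=\lim_{s\to-\infty}\Phi_{s,t}(x_0)$, whose existence I must establish first.

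First I would prove convergence of this limit in $L_1$. The estimate \eqref{eq_inequality_difference} (with $p=1$) controls the distance between two solutions issued from the same time, but only for deterministic endpoints; to apply it to the $\cF_s$-measurable endpoints produced by the cocycle property I would record its conditional version $\E\big[|\Phi_{s,t}(Y)-\Phi_{s,t}(X)|\,\big|\,\cF_s\big]\le C_1e^{-C_2(t-s)}|Y-X|$ for $\cF_s$-measurable $X,Y$, which follows from \eqref{eq_Newton_Leibnitz}, the derivative bound \eqref{eq_Y_to_0} and the independence of the increments of $w$ after time $s$. Writing $\Phi_{-(n+1),t}(x_0)=\Phi_{-n,t}\big(\Phi_{-(n+1),-n}(x_0)\big)$ and applying the conditional contraction over $[-n,t]$ gives
\begin{equation*}
\E\big|\Phi_{-(n+1),t}(x_0)-\Phi_{-n,t}(x_0)\big|\le C_1e^{-C_2(t+n)}\,\E\big|\Phi_{-(n+1),-n}(x_0)-x_0\big|.
\end{equation*}
By time-homogeneity the last factor equals the one-step displacement $\E|\Phi_{0,1}(x_0)-x_0|$, a finite constant independent of $n$; hence $\sum_n\E|\Phi_{-(n+1),t}(x_0)-\Phi_{-n,t}(x_0)|$ is geometrically convergent, so $\Phi_{-n,t}(x_0)$ converges in $L_1$ (and, along the series, almost surely) to $\vp_t$, uniformly for $t$ in compact sets after a routine maximal estimate.

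Next I would verify that $(\vp_t)$ solves \eqref{eq_main_stationary} and is stationary. Passing to the limit $s=-n\to-\infty$ in the integral equation satisfied by $\Phi_{-n,t}(x_0)$, the linear drift and the Lipschitz term $\sigma$ cause no trouble, while the only delicate point is the discontinuous drift $\alpha$: since $\sigma$ is uniformly elliptic the limit process spends zero time on $S$ (Remark~\ref{Remark_zero_time}), so $\alpha(\Phi_{-n,u}(x_0))\to\alpha(\vp_u)$ for almost every $(u,\omega)$ and bounded convergence closes the limit in both the Lebesgue and the It\^o integrals. Stationarity then follows from the shift-covariance of the construction: if $\theta_h$ denotes the time-shift of the two-sided noise, then $\Phi_{s,t}(x_0)\circ\theta_h=\Phi_{s+h,t+h}(x_0)$, whence $\vp_t\circ\theta_h=\vp_{t+h}$; since $\theta_h$ preserves $\P$, the law of $(\vp_{t+h})_{t\in\mathds{R}}$ equals that of $(\vp_t)_{t\in\mathds{R}}$, i.e.\ strict stationarity.

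Finally, for uniqueness, if $\vp,\tilde\vp$ are two stationary solutions then $\tilde\vp_t=\Phi_{s,t}(\tilde\vp_s)$ by uniqueness of the forward equation, so the conditional contraction yields $\E|\vp_t-\tilde\vp_t|\le C_1e^{-C_2(t-s)}\,\E|\vp_s-\tilde\vp_s|$. Letting $s\to-\infty$ with $t$ fixed forces $\E|\vp_t-\tilde\vp_t|=0$, provided $\sup_s\E|\vp_s-\tilde\vp_s|<\infty$. To secure this I would establish a uniform second-moment bound for any stationary solution by applying It\^o's formula to $|\vp_t|^2$, using the dissipative term $-\lambda|\vp_t|^2$ together with the boundedness of $\alpha$ and $\sigma$ and the stationarity identity $\frac{d}{dt}\E|\vp_t|^2=0$, with a localization argument supplying the a priori integrability needed to take expectations. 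I expect the genuine obstacles to be precisely these two points—extending the contraction to random, $\cF_s$-measurable initial data and passing to the limit through the discontinuous drift—rather than the softer stationarity and moment-bound steps.
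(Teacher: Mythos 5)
Your existence construction is essentially the paper's: both build the stationary solution as a pullback limit of $\vp_{s,t}(x_0)$ as $s\to-\infty$, and both rest on the conditional version of the contraction \eqref{eq_inequality_difference} for $\cF_s$-measurable initial data. The only real difference is how the Cauchy property is quantified: you telescope over unit time steps and use finiteness of the one-step displacement $\E|\Phi_{0,1}(x_0)-x_0|$, while the paper proves a uniform-in-time second-moment bound (Lemma \ref{Lemma_phi_second_moment}, via the Lyapunov inequality $A|x|^2\leq K_1-K_2|x|^2$) and applies the contraction once; both work. One caveat in your verification that the limit solves the equation: you invoke Remark \ref{Remark_zero_time} for the limit process, but that remark applies to solutions of \eqref{eq_main}, which your limit is not yet known to be at that stage. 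The paper avoids this circularity by first establishing the flow identity $\psi(t)=\vp_{s,t}(\psi(s))$ a.s.\ (a direct consequence of your conditional contraction), from which both the SDE on $[s,t]$ and the zero-time-on-$S$ property follow by substituting the $\cF_s$-measurable initial value into the strong solution.

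The genuine gap is in uniqueness. Your argument needs $\sup_s \E|\vp_s-\tilde\vp_s|<\infty$ for an \emph{arbitrary} stationary solution $\tilde\vp$, and you propose to get it from It\^o's formula for $|\tilde\vp_t|^2$, the identity $\frac{d}{dt}\E|\tilde\vp_t|^2=0$, and localization. As stated this does not work: if $\E|\tilde\vp_0|^2=\infty$, the stationarity identity is vacuous (it reads $\infty=\infty$), and localization by stopping times destroys stationarity of the localized quantity, so the two ingredients cannot be combined. The claim itself is true --- every stationary solution has finite second moment --- but proving it requires a different mechanism: condition on the $\cF_s$-measurable event $\{|\tilde\vp(s)|\leq M\}$, use the pointwise dissipative estimate $\E|\vp_{s,t}(x)|^2\leq e^{-\lambda(t-s)}|x|^2+K_1/\lambda$ to obtain $\E\big[\mathds{1}_{|\tilde\vp(s)|\leq M}|\tilde\vp(t)|^2\big]\leq e^{-\lambda(t-s)}M^2+K_1/\lambda$, then let $t-s\to\infty$ and $M\to\infty$ jointly, using stationarity of the pair and Fatou's lemma. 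The paper sidesteps the moment question entirely with a truncation trick: it estimates $\E\big(|\tilde\psi(t)-\psi(t)|\wedge 1\big)$, conditions on $\cF_s$, applies Theorem \ref{Theorem_convergence}, uses the elementary bound $\left((|a|+|b|)c\right)\wedge 1\leq (|a|c\wedge 1)+(|b|c\wedge 1)$, invokes stationarity of each solution separately to pull the time index back to $0$, and concludes by dominated convergence as $s\to-\infty$; this is precisely why the paper can treat stationary solutions ``possibly without finite moments.'' To close your proof, either adopt that truncation or replace your moment-bound sketch by the conditional-dissipation argument above.
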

\begin{proof}  The proof of this theorem is quite standard, see \cite{DaPrato+96}. So we outline only the main steps
without technical details.
{\it Existence.}
Denote by $\vp_{s,t}(x)$, $t\in[s,\infty)$, a solution to the SDE
\begin{equation}\label{eq_main_stationary_1}
\left\{
\begin{aligned}
d\vp_{s,t}(x)&=\left[-\lambda\vp_{s,t}(x)+\alpha(\vp_{s,t}(x))\right]dt+\sum_{k=1}^{m}\sigma_{k}(\vp_{s,t}(x))dw_{k}(t), \ t\geq s,\\
\vp_{s,s}(x)&=x.\\
\end{aligned}\right.
\end{equation}
A stationary solution is looked as a limit in $L_2$ of $\vf_{s,t}(0)$ as $s\to-\infty.$
\begin{lem} \label{Lemma_phi_second_moment}
For all $s\in\mathds{R}$ and $x\in\R$,
\begin{equation}\label{eq_phi_second_moment}
\sup_{t\in[s,\infty)} \mathds{E}|\vp_{s,t}(x)|^2<\infty.
\end{equation}
\end{lem}
\begin{proof}
Let $f\in C^2(\R)$ and $A$ be the infinitesimal generator of the process $(\vp_{s,t}(x))_{t\geq0}$:
$$
Af(x)=\sum_{i=1}^d(-\lambda x^i+\alpha^i(x))\frac{\partial f}{\partial x^i}(x)+\sum_{i,j=1}^d(\sigma(x)\sigma(x)^T)_{i,j}\frac{\partial^2 f}{\partial x^i \partial x^j}(x).
$$
 It is well known (e.g. \cite{Kulik17}, \S 3.2) that to prove \eqref{eq_phi_second_moment} it is enough to verify that there exist $K_1, K_2>0$ such that for all $x\in\R$,
\begin{equation}\label{eq_infinit_K}
A|x|^2\leq K_1-K_2|x|^2.
\end{equation}
We have
$$
A|x|^2=-2\lambda|x|^2+2(\alpha(x),x)+|\sigma|^2.
$$

It is easy to see that \eqref{eq_infinit_K} is satisfied with, for example, $K_1=\frac{\|\alpha\|_\infty^2}{\lambda}+\|\sigma\|_\infty^2$, $K_2=\lambda$. Recall that $\|\alpha\|_\infty=\esssup_{x\in\R}|\alpha(x)|$.
\end{proof}
Let   $t\in\mathds{R}$, $s\leq t$. It
 follows from the uniqueness of the strong solution to \eqref{eq_main_stationary_1}
 that
$$
\vp_{s-p,t}(0)=\vp_{s,t}(\vp_{s-p,s}(0)) \ \mbox{a.s.}
$$
By Theorem \ref{Theorem_convergence} and Lemma \ref{Lemma_phi_second_moment},
\begin{multline*}
\sup_{p\geq 0}\mathds{E}|\vp_{s-p,t}(0)-\vp_{s,t}(0)|=
\sup_{p\geq 0}\mathds{E}|\vp_{s,t}(\vp_{s-p,s}(0))-\vp_{s,t}(0)|\leq\\
\sup_{p\geq 0} C_1 e^{C_2(s-t)}\mathds{E}|\vp_{s-p,s}(0)-0|\leq C_3e^{C_2(s-t)}\to 0, \ s\to-\infty.
\end{multline*}
Here $C_1, C_2$ are constants from Theorem \ref{Theorem_convergence}, $C_3$ is some positive constant that comes from Lemma \ref{Lemma_phi_second_moment}.

Therefore there exists
a limit
\[
\psi(t):=L_2 \lim_{s\to-\infty}\vp_{s,t}(0).
\]
Stationarity of $\psi(t)$ follows from the construction.

 Theorem \ref{Theorem_convergence} and the construction of $\psi(t)$ yield
 that for any $s\leq t:$
$$
 \psi(t)=\vp_{s,t}(\psi(s)) \mbox{ a.s.}
 $$
It follows easily from the last equation that $\psi(t)$
has a continuous modification.

{\it Uniqueness.} Let $(\tilde{\psi}(t))_{t\in\mathds{R}}$ be another stationary
solution, possibly without finite moments.
We have for any $s\leq t:$
\begin{multline}\label{eq_uniqueness_distribution}
\mathds{E}\left(|\tilde{\psi}(t)-\psi(t)|\wedge 1\right)=
\mathds{E}\left(|\vp_{s,t}(\tilde{\psi}(s))-\vp_{s,t}(\psi(s))|\wedge 1\right)\leq\\
\mathds{E}\left(\mathds{E}\left(|\vp_{s,t}(\tilde{\psi}(s))-\vp_{s,t}(\psi(s))|\wedge 1\right)\big|\cal{F}_s\right)
= \mathds{E}\left(\mathds{E}\left(|\vp_{s,t}(x)-\vp_{s,t}(y)|\wedge 1\right)\Big|_{\substack{x=\tilde\psi(s),
 \\ y=\psi(s)}}\right)\leq
\\
\mathds{E}\left(\left(\mathds{E} |\vp_{s,t}(x)-\vp_{s,t}(y)|\right)\Big|_{\substack{x=\tilde\psi(s),
 \\ y=\psi(s)}}\wedge 1
\right)\leq
\mathds{E}\left(|x-y|C_1e^{-C_2(t-s)} \Big|_{\substack{x=\tilde\psi(s),
 \\ y=\psi(s)}} \wedge 1
 \right)=
\\
\mathds{E}\left(|\tilde{\psi}(s)-\psi(s)|C_1e^{-C_2(t-s)}\wedge 1\right)\leq\\
 \mathds{E}\left[|\psi(s)|C_1e^{-C_2(t-s)}\wedge 1\right]+\mathds{E}\left[
|\tilde{\psi}(s)|C_1e^{-C_2(t-s)}\wedge 1\right]=\\
\mathds{E}\left[|\psi(0)|C_1e^{-C_2(t-s)}\wedge 1\right]+\mathds{E}\left[
|\tilde{\psi}(0)|C_1e^{-C_2(t-s)}\wedge 1\right].
\end{multline}
Here $C_1, C_2$ are constants from \eqref{eq_inequality_difference}.
 By Lebesgue's dominated convergence theorem, the right-hand side of
 \eqref{eq_uniqueness_distribution} tends to zero as $s\to-\infty$.
Hence
  $\mathds{E}|\tilde\psi(t)-\psi(t)|\wedge 1= 0$. This and continuity of
  $(\tilde\psi(t)), (\psi(t))$ yield
  \[
  \P(\tilde\psi(t)=\psi(t),\ t\in {\mathds{R}})=1.
  \]
\end{proof}

\section{Appendix}
\begin{proof}[Proof of Lemma \ref{Lemma_moment_of_Y}]
Put
$$
\tau^N=\inf\{t\geq0:t+\int_0^t|Y_s(x)|^2ds+L^0_t(\vp^d(x))\geq N\}.
$$

By It\^{o}'s formula,
\begin{multline*}
  e^{h(t\wedge \tau^N)}|Y_{t\wedge \tau^N}(x)|^2=|Y_0(x)|^2+\int_0^{t\wedge \tau^N} e^{h(s)}|Y_s(x)|^2(2\lambda-2K_{\alpha}-K_{\sigma}^2)ds-\\
  2\int_0^{t\wedge \tau^N} e^{h(s)}|Y_s(x)|^2|D(\vp_s(x))|dL_s^0(\vp(x))+
  2\int_0^{t\wedge \tau^N} e^{h(s)}\sum_{i,j=1}^d Y_s^{ij}(x)dY_s^{ij}(x)ds+ \\
  \sum_{k=1}^m\sum_{i,j=1}^d \int_0^ {t\wedge \tau^N} e^{h(s)}\left(\sum_{r=1}^{d}\nabla\sigma_k^{ir}(\vp_s(x))Y_s^{rj}(x)\right)^2
  ds=\\
  |Y_0(x)|^2+\int_0^{t\wedge \tau^N} e^{h(s)}|Y_s(x)|^2\big(2\lambda-2K_{\alpha}-K_{\sigma}^2)ds-\\2\int_0^{t\wedge \tau^N} e^{h(s)}|Y_s(x)|^2|D(\vp_s(x))|dL_s^0(\vp(x))- \\
  2\lambda \int_0^{t\wedge \tau^N} e^{h(s)}\sum_{i,j=1}^d(Y_s^{ij}(x))^2ds+2\int_0^{t\wedge \tau^N} e^{h(s)}\sum_{i,j,q=1}^d Y_s^{ij}(x)\nabla \alpha^{iq}(\vp_s(x))Y_s^{qj}(x)ds+\\
  2\int_0^{t\wedge \tau^N} e^{h(s)}\sum_{i,j,q=1}^d Y_s^{ij}(x)D^{iq}(\vp_s(x))Y_s^{qj}(x)dL_s^0(\vp^d(x))+\\
  2\int_0^{t\wedge \tau^N} e^{h(s)}\sum_{k=1}^m\sum_{i,j,q=1}^d Y_s^{ij}(x)\nabla\sigma_k^{iq}(\vp_s(x))Y_s^{qj}(x)dw_k(s)+\\
  \sum_{k=1}^m\sum_{i,j=1}^d \int_0^{t\wedge \tau^N} e^{h(s)}\left(\sum_{r=1}^{d}\nabla\sigma_k^{ir}(\vp_s(x))Y_s^{rj}(x)\right)^2ds.
\end{multline*}

Using the Cauchy-Schwarz inequality we get that for each $t>0$,
\begin{equation*}
\left|\sum_{i,j,q=1}^d Y_t^{ij}(x)\nabla\alpha^{iq}(\vp_t(x))Y_t^{qj}(x)\right|\leq
|Y_t(x)|^2|\nabla\alpha(\vp_t(x))|.
\end{equation*}
Then taking into account Remark \ref{Remark_zero_time} we obtain
\begin{equation}\label{eq_a1}
\int_0^{t\wedge\tau^N}\left|\sum_{i,j,q=1}^d Y_s^{ij}(x)\nabla\alpha^{iq}(\vp_s(x))Y_s^{qj}(x)\right|ds\leq K_{\alpha}\int_0^{t\wedge\tau^N}|Y_s(x)|^2ds,
\end{equation}
where $K_\alpha$ is defined by \eqref{eq_K_alpha_def}.

Similarly,
\begin{multline}\label{eq_a2}
\int_0^{t\wedge \tau^N} e^{h(s)}\left|\sum_{i,j,q=1}^d Y_s^{ij}(x)D^{iq}(\vp_s(x))Y_s^{qj}(x)\right|dL_s^0(\vp^d(x))\leq \\ \|D\|_\infty\int_0^{t\wedge \tau^N} e^{h(s)}|Y_s(x)|^2dL_s^0(\vp^d(x)).
\end{multline}
Further, for $t>0$,
\begin{multline}\label{eq_a3}
\sum_{k=1}^m\sum_{i,j=1}^d \left(\sum_{r=1}^{d}\nabla\sigma_k^{ir}(\vp_t(x))Y_t^{rj}(x)\right)^2\leq\\
\sum_{k=1}^m\sum_{i,j=1}^d \left(\sum_{r=1}^{d}\big(\nabla\sigma_k^{ir}(\vp_t(x))\big)^2\right)\left(\sum_{r=1}^{d}\big(Y_t^{rj}(x)\big)^2\right)\leq |\nabla\sigma(x)|^2|Y_t(x)|^2\leq K_\sigma^2|Y_t(x)|^2,
\end{multline}
where $K_\sigma$ is defined by \eqref{eq_K_sigma_def}.

Taking into account \eqref{eq_a1}--\eqref{eq_a3} we get
\begin{multline*}
e^{h(t\wedge \tau^N)}|Y_{t\wedge \tau^N}(x)|^2=|Y_0(x)|^2+(2\lambda-2K_{\alpha}-K_{\sigma}^2)\int_0^{t\wedge \tau^N} e^{h(s)}|Y_s(x)|^2ds-\\
2\int_0^{t\wedge \tau^N} e^{h(s)}|Y_s(x)|^2\|D\|_\infty dL_s^0(\vp(x))-
2\lambda \int_0^{t\wedge \tau^N} e^{h(s)}|Y_s(x)|^2ds+\\2dK_{\alpha}\int_0^{t\wedge \tau^N} e^{h(s)}|Y_s(x)|^2ds+
2\int_0^{t\wedge\tau^N} e^{h(s)}\|D\|_\infty|Y_s(x)|^2dL_s^0(\vp^d(x))+\\
K_{\sigma}^2\int_0^te^{h(s)}|Y_s(x)|^2ds+\\
2\int_0^{t\wedge\tau^N}e^{h(s)}\sum_{k=1}^m\sum_{i,j,q=1}^d Y_s^{ij}(x)\nabla\sigma_k^{iq}(\vp_s(x))Y_s^{qj}(x)dw_k(s)\leq
|Y_0(x)|^2+M(t\wedge\tau^N),
\end{multline*}
where
$$
M(t\wedge\tau^N)=2\int_0^{t\wedge\tau^N}e^{h(s)}\sum_{k=1}^m\sum_{i,j,q=1}^d Y_s^{ij}(x)\nabla\sigma_k^{iq}(\vp_s(x))Y_s^{qj}(x)dw_k(s), t\geq0,
$$
is a square integrable martingale. Then for all $t\geq0$,
$$
\mathds{E}e^{h(t\wedge \tau^N)}|Y_{t\wedge \tau^N}(x)|^2\leq |Y_0(x)|^2=|E|^2=d.
$$
Passing to the limit as $N\to\infty$ and applying Fatou's lemma we get that for all $T>0$,
$$
\sup_{t\in[0,T]}\mathds{E}e^{h(t)}|Y_{t}(x)|^2\leq d.
$$
\end{proof}

\begin{proof}[Proof of Lemma \ref{Lemma_estimate_local_time}]
The process $(\vp_t(x))_{t\geq0}$ is a multidimensional  semimartingale.
Set
\begin{equation}\label{eq_Tanaka_formula}
\widetilde{L}_t^0(\vp^d(x))=2(\vp_t^d(x))^+-2(x^d)^+-2\int_0^t\mathds{1}_{\vp_s^d(x)>0}d\vp_s^d(x), \ t\geq0.
\end{equation}
This process is also called a local time of the process $(\vp^d_t(x))_{t\geq0}$ at zero and satisfies the equality (see \cite{Revuz+99}, Ch. VI, Corollary (1.9) and Th. (1.7))
\begin{multline*}
\widetilde{L}_t^0(\vp^d(x))=\lim_{\varepsilon\downarrow 0}\frac{1}{2\varepsilon}\int_0^t \mathds{1}_{(-\varepsilon,\varepsilon)}(\varphi_s^d(x))d\langle\vp^d(x),\vp^d(x)\rangle_s=\\
=\lim_{\varepsilon\downarrow 0}\frac{1}{2\varepsilon}\int_0^t\mathds{1}_{(-\varepsilon,\varepsilon)}(\varphi_s^d(x))\sum_{k=1}^d\big(\sigma_k^d(\vp_s(x))\big)^2ds,
\end{multline*}
which holds almost surely.

Using \eqref{eq_local_time_0} and (B3) in which we put $\theta^*=(0,0,\dots,0,1)$ we get that, almost surely,
$$
\widetilde{L}_t^0(\vp^d(x))\geq B_{\sigma}\lim_{\varepsilon\downarrow 0}\frac{1}{2\varepsilon}\int_0^t \mathds{1}_{(-\varepsilon,\varepsilon)}(\varphi_s^d(x))ds=B_{\sigma}L_t^0(\vp^d(x))=B_{\sigma}L_t^0(\vp(x)).
$$
So
\begin{equation}\label{eq_local_times_inequality_1}
L_t^0(\vp^d(x))\leq \frac{1}{B_\sigma}\widetilde{L}_t^0(\vp^d(x)).
\end{equation}
Consequently, to estimate $\mathds{E}L_t^0(\vp(x))$ it is enough to get an estimation for $\mathds{E}\widetilde{L}_t^0(\vp^d(x))$.

Since the local time $L_t^0(\vp^d(x))=L_t^S(\vp(x))$ does not increase until the first time when the process $(\vp_t(x))_{t\geq0}$ reaches the hyperplane $S$, it follows from the strong Markov property of the process $(\vp_t(x))_{t\geq0}$ that
\begin{equation}\label{eq_inequality_local_time}
\mathds{E}{L}_t^0(\vp^d(x))\leq \sup_{\tilde{x}\in S}\mathds{E}L_{t}^0(\vp^d(\tilde{x})).
\end{equation}


To estimate the expectation of the local time let us estimate each term in the right-hand side of Tanaka's formula \eqref{eq_Tanaka_formula}.
Note that for all $t\geq 0$,
\begin{equation}\label{eq_ineq_for_phi_plus}
\mathds{E}(\vp^d_t(\tilde{x}))^+\leq \mathds{E}|\vp_t^d(\tilde{x})|\leq \sqrt{\mathds{E}\big(\vp_t^d(\tilde{x})\big)^2}.
\end{equation}
By It\^o's formula, for any $\tilde{x}\in S$
\begin{multline}\label{eq_phi_d_2}
(\vp^d_t(\tilde{x}))^2=\vp_0^d(\tilde x))^2+2\int_0^t\vp^d_s(\tilde{x})d\vp_s^d(\tilde{x})+\sum_{k=1}^m\int_0^t\big(\sigma_k^d(\vp_s(\tilde{x}))\big)^2ds=\\
\int_0^t\left[-2\lambda(\vp_s^d(\tilde{x}))^2+2\alpha^d(\vp_s(\tilde{x}))\vp_s^d(\tilde{x})+\sum_{k=1}^m\big(\sigma_k^d(\vp_s(\tilde{x}))\big)^2\right]ds+\\
2\int_0^t\vp_s^d(\tilde{x})\sum_{k=1}^m\sigma_k^d(\vp_s(\tilde{x}))dw_k(s).
\end{multline}
Consider the expression in the square brackets. We have
\begin{equation*}
-2\lambda(\vp_s^d(\tilde{x}))^2+2\alpha^d(\vp_s(\tilde{x}))\vp_s^d(\tilde{x})+\sum_{k=1}^m\big(\sigma_k^d(\vp_s(\tilde{x}))\big)^2\leq f(\vp_s^d(\tilde x)),
\end{equation*}
where $f(x)=-2\lambda x^2+2||\alpha^d||_{\infty}^2x+||\sigma||_{\infty}^2$. The function $f$ attains the global maximum at
$x_{max}=\frac{||\alpha^d||_{\infty}^2}{2\lambda}$, and $f(x_{max})=\frac{||\alpha^d||_{\infty}^2}{2\lambda}+||\sigma||_{\infty}^2$. Substituting the maximum value of $f$ into \eqref{eq_phi_d_2} we get
$$
(\vp^d_t(\tilde{x}))^2\leq \left(\frac{||\alpha^d||_{\infty}^2}{2\lambda}+||\sigma||_{\infty}^2\right)t+2\sum_{k=1}^m\int_0^t\vp_s^d(\tilde{x})\sigma_k^d(\vp_s(\tilde{x}))dw_k(s).
$$
Here we use the fact that $\vp^d_0(\tilde{x})=0$. Note that $\sum_{k=1}^m\int_0^t\vp_s^d(\tilde{x})\sigma_k^d(\vp_s(\tilde{x}))dw_k(s)$ is a  local square integrable martingale.

Using localization and Fatou's lemma we obtain
$$
\mathds{E}(\vp^d_{t}(\tilde{x}))^2\leq \left(\frac{||\alpha^d||_{\infty}^2}{2\lambda}+||\sigma||_{\infty}^2\right)t.
$$
Hence, by \eqref{eq_ineq_for_phi_plus}
\begin{equation}\label{eq_phi_plus}
\mathds{E}(\vp^d_t(\tilde{x}))^+\leq \sqrt{\left(\frac{||\alpha^d||_{\infty}^2}{2\lambda}+||\sigma||_{\infty}^2\right)t}.
\end{equation}
Further,
\begin{multline*}
-\int_0^t\mathds{1}_{\vp^d_s(\tilde{x})>0}d\vp_s^d(\tilde{x})=\\ \lambda\int_0^t\mathds{1}_{\vp^d_s(\tilde{x})>0}\vp_s^d(\tilde{x})ds-
\int_0^t\mathds{1}_{\vp^d_s(\tilde{x})>0}\alpha^d(\vp_s(\tilde{x}))ds-\sum_{k=1}^m\int_0^t\mathds{1}_{\vp^d_s(\tilde{x})>0}\sigma_k^d(\vp_s(\tilde{x}))dw_k(s).
\end{multline*}
Using \eqref{eq_phi_plus} we get
\begin{multline}\label{eq_indicator_phi}
-\mathds{E}\int_0^t\mathds{1}_{\vp^d_s(\tilde{x})>0}d\vp_s^d(\tilde{x})\leq \|\alpha^d\|_{\infty}t+\lambda\mathds{E}\int_0^t(\vp_s^d(\tilde{x}))^+ds\leq\\
\|\alpha^d\|_{\infty}t+\frac{2\lambda}{3}\sqrt{\left(\frac{||\alpha^d||_{\infty}^2}{2\lambda}+\|\sigma\|_{\infty}^2\right)t^3}.
\end{multline}
Taking into account \eqref{eq_phi_plus}, \eqref{eq_indicator_phi} we obtain
\begin{equation*}
\mathds{E}\widetilde{L}_t^0(\vp^d(\tilde{x}))\leq  \rho(t,\lambda),
\end{equation*}
where
$$
\rho(t,\lambda)=||\alpha^d||_{\infty}t+\left(1+\frac{2}{3}\lambda t\right)\sqrt{\left(\frac{||\alpha^d||_{\infty}^2}{2\lambda}+\|\sigma\|_{\infty}^2\right)t}.
$$
Then  \eqref{eq_inequality_local_time} implies that for each $x\in\R$,
\begin{equation*}
\mathds{E}\widetilde{L}_t^0(\vp^d(x))\leq  \rho(t,\lambda),
\end{equation*}
and
\begin{equation*}
\sup_{x\in\R}\mathds{E}\widetilde{L}_t^0(\vp^d(x))\leq  \rho(t,\lambda).
\end{equation*}
By \eqref{eq_local_times_inequality_1},
$$
\sup_{x\in\R}\mathds{E}L_t^0(\vp^d(x))\leq \frac{\rho(t,\lambda)}{B_\sigma}.
$$
\end{proof}

\section*{Acknowledgment}
The authors appreciate to the reviewer for scrutinize reading and oversights found.


\begin{thebibliography}{10}

\bibitem{Aryasova+17}
O.~Aryasova and A.~Pilipenko.
\newblock A representation for the derivative with respect to the initial data
  of the solution of an sde with a non-regular drift.
\newblock {\em North-W. Eur. J. of Math.}, 3:1--40, 2017.

\bibitem{Aryasova+2020}
O.~Aryasova and A.~Pilipenko.
\newblock Exponential a.s. synchronization of one-dimensional diffusions with
  non-regular coefficients, 2020, arXiv2003.02614.

\bibitem{Aryasova+12}
O.~V. Aryasova and A.~Yu. Pilipenko.
\newblock On properties of a flow generated by an {SDE} with discontinuous
  drift.
\newblock {\em Electron. J. Probab.}, 17:no. 106, 1--20, 2012.

\bibitem{Bouleau+2010}
N.~Bouleau and F.~Hirsch.
\newblock {\em Dirichlet Forms and Analysis on Wiener Space}.
\newblock De Gruyter, Berlin, Boston, 2010.

\bibitem{Dynkin63}
E.~B. Dynkin.
\newblock {\em Markov Processes}.
\newblock Fizmatlit, Moscow, 1963.
\newblock [Translated from the Russian to the English by J. Fabius, V.
  Greenberg, A. Maitra, and G. Majone. Academic Press, New York; Springer,
  Berlin, 1965. vol. 1, xii + 365 pp.; vol. 2, viii + 274 pp.].

\bibitem{Flandoli+2017}
F.~Flandoli, B.~Gess, and M.~Scheutzow.
\newblock Synchronization by noise for order-preserving random dynamical
  systems.
\newblock {\em Ann. Probab.}, 45(2):1325--1350, 03 2017.

\bibitem{Gikhman+04_II}
I.~I. Gikhman and A.~V. Skorokhod.
\newblock {\em The Theory of Stochastic Processes. II}.
\newblock Nauka, Moscow, 1973.
\newblock [Translated from the Russian by S. Kotz. Corrected printing of the
  first edition. Berlin: Springer, 2004. viii, 441~p.].

\bibitem{Ito+64}
K.~It\^o and M.~Nisio.
\newblock On stationary solutions of a stochastic differential equation.
\newblock {\em J. Math. Kyoto Univ.}, 4(1):1--75, 1964.

\bibitem{Khasminskii59}
R.~Z. Khas’minskii.
\newblock On positive solutions of the equation $\mathfrak{A}u + vu = 0$.
\newblock {\em Theory of Probability and Its Applications}, 4(3):309--318,
  1959.

\bibitem{Khasminskii12}
R.Z. Khas’minskii.
\newblock {\em Stochastic Stability of Differential Equations}.
\newblock Springer, Berlin, Heidelberg, 2012.
\newblock [Originally published in Russian, by Nauka, Moskow 1969. 1st English
  ed. published 1980 under R.Z. Has'minski in the series Mechanics: Analysis by
  Sijthoff \& Noordhoff.].

\bibitem{Kulik17}
A.~Kulik.
\newblock {\em Ergodic Behavior of Markov Processes: With Applications to Limit
  Theorems}.
\newblock De Gruyter Studies in Mathematics Series. Walter de Gruyter GmbH,
  2017.

\bibitem{DaPrato+96}
G.~Da Prato and Z.~Zabczyk.
\newblock {\em Ergodicity for Infinite-Dimensional Systems}.
\newblock Probability theory and mathematical statistics. Cambridge University
  Press, Cambridge, 1996.

\bibitem{Protter04}
P.~E. Protter.
\newblock {\em Stochastic Integration and Differential Equations}.
\newblock Springer-Verlag, Berlin, 2004.

\bibitem{Revuz+99}
D.~Revuz and M.~Yor.
\newblock {\em Continuous Martingales and Brownian Motion}.
\newblock Springer-Verlag, Berlin, 1999.

\bibitem{Sznitman98}
A.-S. Sznitman.
\newblock {\em {Brownian Motion, Obstacles and Random Media.}}
\newblock {Springer Monographs in Mathematics}, Berlin: Springer, 1998.

\bibitem{Veretennikov81}
A.~Y. Veretennikov.
\newblock On strong solutions and explicit formulas for solutions of stochastic
  integral equations.
\newblock {\em Math. USSR Sborn}, 39(3):387--403, 1981.

\end{thebibliography}

\end{document}